\theoremstyle{plain}
\newtheorem{thm}{Theorem}[section]
\newtheorem{lem}[thm]{Lemma}
\newtheorem{prop}[thm]{Proposition}
\newtheorem{cor}[thm]{Corollary}
\theoremstyle{definition}
\newtheorem{defn}[thm]{Definition}
\newtheorem{rem}[thm]{Remark}
\newtheorem{ex}[thm]{Example}
\numberwithin{equation}{section}
\newcommand{\eps}{\varepsilon}
\newcommand{\abs}[1]{\lvert #1 \rvert}
\newcommand{\Z}{\mathbb{Z}}
\newcommand{\Q}{\mathbb{Q}}
\newcommand{\R}{\mathbb{R}}
\newcommand{\C}{\mathbb{C}}
\newcommand{\bk}{\mathbf{k}}
\newcommand{\bl}{\mathbf{l}}
\newcommand{\bs}{\mathbf{s}}
\newcommand{\bz}{\mathbf{z}}
\newcommand{\bX}{\mathbf{X}}
\newcommand{\bY}{\mathbf{Y}}
\newcommand{\calZ}{\mathcal{Z}}
\newcommand{\calY}{\mathcal{Y}}
\newcommand{\Li}{\mathrm{Li}}
\newcommand{\wt}[1]{\lvert #1\rvert}
\keywords{Kaneko-Tsumura multiple zeta function, multiple zeta values}
\subjclass[2010]{11M32}
\title[Multiple zeta of Kaneko-Tsumura type]
{Multiple zeta functions of Kaneko-Tsumura type and their values at positive integers}
\author{Shuji Yamamoto}
\address{Department of Mathematics, Faculty of Science and Technology, Keio University, 
3-14-1 Hiyoshi, Kohoku-ku, Yokohama, 223-8522, Japan}
\email{yamashu@math.keio.ac.jp}
\thanks{This work was supported in part by JSPS KAKENHI JP26247004, JP18H05233, JP18K03221, JP21K03185, JSPS Core-to-core program ``Foundation of a Global Research Cooperative Center in Mathematics focused on Number Theory and Geometry'', JSPS Joint Research Project with CNRS ``Zeta functions of several variables and applications'', and the KiPAS program 2013–2018 of the Faculty of Science and Technology at Keio University. }
\begin{document}

\begin{abstract}
Kaneko and Tsumura introduced a new kind of multiple zeta functions 
$\eta(k_1,\ldots,k_r;s_1,\ldots,s_r)$. 
This is an analytic function of complex variables $s_1,\ldots,s_r$, 
while $k_1,\ldots,k_r$ are non-positive integer parameters. 
In this paper, we first extend this function to an analytic function 
$\eta(s'_1,\ldots,s'_r;s_1,\ldots,s_r)$ of $2r$ complex variables. 
Then we investigate its special values at positive integers. 
In particular, we prove some linear relations among these $\eta$-values and 
the multiple zeta values $\zeta(k_1,\ldots,k_r)$ of Euler-Zagier type. 
\end{abstract}

\maketitle

\section{Introduction}
In \cite{KT}, M.~Kaneko and H.~Tsumura introduced and studied a new kind of multiple zeta functions 
\begin{equation}\label{eq:eta(r;1)}
\eta(k_1,\ldots,k_r;s)\coloneqq \frac{1}{\Gamma(s)}
\int_0^\infty \frac{\Li_{k_1,\ldots,k_r}(1-e^t)}{1-e^t}t^{s-1}dt, 
\end{equation}
which is a `twin sibling' of the Arakawa-Kaneko multiple zeta function \cite{AK}
\begin{equation}\label{eq:xi}
\xi(k_1,\ldots,k_r;s)\coloneqq \frac{1}{\Gamma(s)}
\int_0^\infty\frac{\Li_{k_1,\ldots,k_r}(1-e^{-t})}{e^t-1}t^{s-1}dt. 
\end{equation}
Here $k_1,\ldots,k_r$ are integers, $s$ is a complex variable 
and $\Li_{k_1,\ldots,k_r}$ denotes the multiple polylogarithm of one variable 
\[\Li_{k_1,\ldots,k_r}(z)\coloneqq\sum_{0<n_1<\cdots<n_r}
\frac{z^{n_r}}{n_1^{k_1}\cdots n_r^{k_r}}. \]
Among other things, when $r=1$, they proved the equality 
\begin{equation}\label{eq:eta(k;l)=eta(l;k)}
\eta(k;l)=\eta(l;k)
\end{equation}
for nonpositive integers $k,l$, and experimentally observed 
that the same equality holds even when $k$ and $l$ are positive integers. 

In \cite[\S 5]{KT}, Kaneko and Tsumura also considered 
a variant of \eqref{eq:eta(r;1)} with $r$ complex variables: 
\begin{multline}\label{eq:eta(r;r)}
\eta(k_1,\ldots,k_r;s_1,\ldots,s_r)\\
\coloneqq 
\frac{1}{\prod_{j=1}^r\Gamma(s_j)}
\idotsint_0^\infty
\frac{\Li_{k_1,\ldots,k_r}
(1-e^{\sum_{\nu=1}^r t_\nu},1-e^{\sum_{\nu=2}^r t_\nu},\ldots,1-e^{t_r})}
{\prod_{j=1}^r(1-e^{\sum_{\nu=j}^r t_\nu})}\\
\qquad\qquad\times\prod_{j=1}^r t_j^{s_j-1}dt_j, 
\end{multline}
where 
\begin{equation}\label{eq:Li}
\Li_{k_1,\ldots,k_r}(z_1,\ldots,z_r)
\coloneqq\sum_{0<n_1<\cdots<n_r}
\frac{z_1^{n_1}z_2^{n_2-n_1}\cdots z_r^{n_r-n_{r-1}}}
{n_1^{k_1}\cdots n_r^{k_r}} 
\end{equation}
is the multiple polylogarithm of $r$ variables. 
For certain technical reasons, their consideration on the function \eqref{eq:eta(r;r)} is 
limited to the case that $k_1,\ldots,k_r$ are nonpositive integers. 

\bigskip

In the present paper, we extend the function \eqref{eq:eta(r;r)} to a holomorphic function 
of $2r$ complex variables $\eta(s'_1,\ldots,s'_r;s_1,\ldots,s_r)$, which satisfies 
\begin{equation}\label{eq:symmetry}
\eta(s'_1,\ldots,s'_r;s_1,\ldots,s_r)
=\eta(s_1,\ldots,s_r;s'_1,\ldots,s'_r).
\end{equation}
When $r=1$, it also gives an extension of the function \eqref{eq:eta(r;1)}. 
In particular, we obtain a proof of the equality 
\eqref{eq:eta(k;l)=eta(l;k)} for arbitrary complex numbers $k,l$. 

The second and the main purpose of this paper is to study the special values of the function 
$\eta(s'_1,\ldots,s'_r;s_1,\ldots,s_r)$ at positive integers. 
We prove certain linear relations among these values and the multiple zeta values 
\begin{equation}\label{eq:MZV}
\zeta(k_1,\ldots,k_r)\coloneqq
\sum_{0<n_1<\cdots<n_r}\frac{1}{n_1^{k_1}\cdots n_r^{k_r}} 
\end{equation}
for positive integers $k_1,\ldots,k_r$ with $k_r>1$. 
For example, as special cases of our result (Theorem \ref{thm:sum formula}), we can show that 
\[\eta(\underbrace{1,\ldots,1}_r;\underbrace{1,\ldots,1}_r)=\zeta(\underbrace{2,\ldots,2}_r)\]
and 
\[\eta(k;l)=\sum_{0<a_1\leq\cdots\leq a_k=b_l\geq\cdots\geq b_1>0}
\frac{1}{a_1\cdots a_k\,b_1\cdots b_l} \]
(the right hand side of the latter identity can be expressed as a finite sum of multiple zeta values). 

\bigskip 

The contents of this paper is as follows. 
In \S2, we define the function $\eta(s'_1,\ldots,s'_r;s_1,\ldots,s_r)$ and 
prove its analytic continuation to $\C^{2r}$ by the classical contour integral method. 
In \S3, basic formulas on its special values at positive integers are obtained. 
Some of them are used in \S4, 
where we discuss relations of $\eta$-values with the multiple zeta values. 
Finally, in the appendix A, we prove a formula which expresses 
the values $\eta(k_1,\ldots,k_r;l)$ of the function in \eqref{eq:eta(r;1)}, 
where $k_1,\ldots,k_r$ and $l$ are positive integers, in terms of the multiple zeta values. 

\section{Definition of $\eta(s'_1,\ldots,s'_r;s_1,\ldots,s_r)$}

Let $r$ be a positive integer. 
The definition \eqref{eq:Li} of the multiple polylogarithm is meaningful 
for arbitrary complex numbers $k_1,\ldots,k_r$ and 
complex numbers $z_1,\ldots,z_r$ of absolute values less than $1$. 
We begin with its analytic continuation. 
For a positive real number $\eps$, denote by 
$C_\eps$ the contour which goes from $+\infty$ to $\eps$ 
along the real line, goes round counterclockwise 
along the circle of radius $\eps$ about the origin, and then 
goes back to $+\infty$ along the real line: 
\[\begin{picture}(180,40)
\put(15,20){\circle{30}}
\put(31,20){\line(1,0){130}}
\put(15,20){\circle*{2}}
\put(16,21){$0$}
\put(32,22){$\eps$}
\put(163,17.5){$+\infty$}
\put(110,24){\vector(-1,0){30}}
\put( 20,39){\vector(-1,0){10}}
\put( 10, 1){\vector( 1,0){10}}
\put( 80,16){\vector( 1,0){30}}
\end{picture} \]

\begin{lem}
The multiple polylogarithm $\Li_{\bs}(\bz)$, 
where $\bs=(s_1,\ldots,s_r), \bz=(z_1,\ldots,z_r)\in\C^r$ and 
$\lvert z_i\rvert<1$, has the following integral expression: 
\begin{equation}\label{eq:Li_int}
\Li_{\bs}(\bz)=\prod_{j=1}^r\frac{\Gamma(1-s_j)}{2\pi ie^{\pi is_j}}
\int_{(C_\eps)^r} 
\prod_{j=1}^r\frac{z_ju_j^{s_j-1}du_j}{e^{u_j+\cdots+u_r}-z_j}. 
\end{equation}
Here we assume that $\eps>0$ is sufficiently small. 

By \eqref{eq:Li_int}, $\Li_{\bs}(\bz)$  is holomorphically 
continued to the region 
\[(\bs,\bz)\in \C^r\times(\C\setminus\R_{\geq 1})^r.\] 
\end{lem}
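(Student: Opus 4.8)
The plan is to reduce the whole identity to a classical Hankel-type integral in a single variable and then recognize the multiple polylogarithm series after expanding the integrand geometrically. Concretely, I would first record the one-variable formula
\[\int_{C_\eps} u^{s-1}e^{-nu}\,du=(e^{2\pi is}-1)\,\Gamma(s)\,n^{-s}\qquad(n\in\N,\ s\in\C),\]
proved by the usual Riemann contour argument: for $\mathrm{Re}(s)>0$ one splits $C_\eps$ into the incoming ray ($\arg u=0$), the small circle, and the outgoing ray ($\arg u=2\pi$), lets $\eps\to0$ so that the circular part drops out, and combines the two rays into $(e^{2\pi is}-1)\int_0^\infty x^{s-1}e^{-nx}\,dx=(e^{2\pi is}-1)\Gamma(s)n^{-s}$ after the substitution $v=nu$. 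Since both sides are entire in $s$ (the factor $e^{2\pi is}-1$ cancels the poles of $\Gamma(s)$) and the integral is independent of $\eps$, the formula persists for all $s\in\C$. The delicate point here is keeping track of the branch of $u^{s-1}$, so that the outgoing ray carries exactly the factor $e^{2\pi i(s-1)}=e^{2\pi is}$.

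Next, assuming $|z_j|<1$, I would expand each factor of the integrand as a geometric series $\frac{z_j}{e^{v_j}-z_j}=\sum_{m_j\ge1}z_j^{m_j}e^{-m_jv_j}$, where $v_j\coloneqq u_j+\cdots+u_r$. Writing $\sum_j m_j v_j=\sum_\nu(m_1+\cdots+m_\nu)\,u_\nu$ and setting $n_\nu\coloneqq m_1+\cdots+m_\nu$ turns the sum over $(m_1,\ldots,m_r)\in\N^r$ into the sum over $0<n_1<\cdots<n_r$, with $\prod_j z_j^{m_j}=z_1^{n_1}z_2^{n_2-n_1}\cdots z_r^{n_r-n_{r-1}}$ and $\sum_j m_j v_j=\sum_\nu n_\nu u_\nu$. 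After interchanging summation and integration the integral factors as $\prod_\nu\int_{C_\eps}u_\nu^{s_\nu-1}e^{-n_\nu u_\nu}\,du_\nu$, to which the one-variable formula applies; the product $\prod_\nu(e^{2\pi is_\nu}-1)\Gamma(s_\nu)$ is then cancelled by the prefactor via the reflection formula $\Gamma(s)\Gamma(1-s)=\pi/\sin\pi s$, which gives $\frac{\Gamma(1-s)}{2\pi ie^{\pi is}}(e^{2\pi is}-1)\Gamma(s)=1$, leaving exactly the defining series $\Li_{\bs}(\bz)$. The main labor in this step is justifying the interchange: one checks that the double series converges absolutely and locally uniformly on the contour, treating the rays (where $e^{-\mathrm{Re}(v_j)}<1$) separately from the small circles (where $\eps$ must be taken small depending on the $z_j$).

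For the holomorphy statement I would argue that the right-hand side of the integral formula is holomorphic on all of $\C^r\times(\C\setminus\R_{\ge1})^r$. The integrand is jointly holomorphic in $(\bs,\bz)$ off the contour, and the integral converges locally uniformly because $1/(e^{v_j}-z_j)$ decays exponentially along the rays (dominating the polynomial factor $u_j^{s_j-1}$), while the condition $z_j\notin\R_{\ge1}$ precisely prevents $e^{v_j}=z_j$ on the positive real axis, so for small enough $\eps$ the contour avoids all poles; holomorphy then follows by differentiation under the integral sign. The only apparent singularities come from the poles of $\Gamma(1-s_j)$ at $s_j\in\{1,2,3,\ldots\}$, but on the polydisc $|z_j|<1$ the formula equals the series $\Li_{\bs}(\bz)$, which is entire in $\bs$ there; by the identity theorem this forces the integral to vanish at those $s_j$ for all admissible $\bz$, so the singularities are removable and the right-hand side furnishes the desired continuation. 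I expect this removability/identity-theorem argument, together with the branch bookkeeping in the one-variable formula, to be the most error-prone part.
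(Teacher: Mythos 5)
Your proof is correct, and its computational core coincides with the paper's: expand each factor $z_j/(e^{u_j+\cdots+u_r}-z_j)$ geometrically, reindex via $n_\nu=m_1+\cdots+m_\nu$, evaluate the resulting one-variable integrals by the Gamma/Hankel formula, and cancel the prefactor with the reflection formula. The organization differs, though, in a way that affects the final step. The paper first proves the \emph{real-line} integral formula \eqref{eq:Li_int'}, which is valid for $\Re(s_j)>0$ and \emph{all} $\bz\in(\C\setminus\R_{\geq 1})^r$, and only then deforms $\R_{>0}$ to $C_\eps$ (identity \eqref{eq:contour}); as a result, holomorphy of $\Li_{\bs}(\bz)$ on $\Re(s_j)>0$ is already known for every admissible $\bz$, and the removability of the poles of $\Gamma(1-s_j)$ at $s_j=1,2,\ldots$ is immediate. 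You instead establish the contour formula directly, but only on the polydisc $\lvert z_j\rvert<1$ (where the geometric expansion converges), so your identification with the entire function $\bs\mapsto\Li_{\bs}(\bz)$ holds only there; to kill the poles of $\Gamma(1-s_j)$ for general $\bz\in(\C\setminus\R_{\geq1})^r$ you need the extra step of propagating the vanishing of the contour integral on the divisors $\{s_j=m\}$ from the polydisc to the whole connected domain $(\C\setminus\R_{\geq1})^r$ by the identity theorem in $\bz$. You correctly identify and carry out this step, so the argument closes; it is just slightly longer than the paper's, which trades it for the (equally routine) verification of the contour-deformation identity \eqref{eq:contour} in $r$ variables. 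Your attention to the branch of $u^{s-1}$ on the outgoing ray and to taking $\eps$ small relative to $\lvert z_j\rvert$ on the circular arcs is exactly the right bookkeeping.
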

\begin{proof}
First we note that 
\begin{align*}
\prod_{j=1}^r \Gamma(s_j)\cdot\Li_{\bs}(\bz)
&=\prod_{j=1}^r \Gamma(s_j)\sum_{l_1,\ldots,l_r>0}
\frac{z_1^{l_1}z_2^{l_2}\cdots z_r^{l_r}}
{l_1^{s_1}(l_1+l_2)^{s_2}\cdots(l_1+\cdots+l_r)^{s_r}}\\
&=\sum_{l_1,\ldots,l_r>0}\idotsint_0^\infty 
\prod_{j=1}^r e^{-(l_1+\cdots+l_j)u_j}
z_j^{l_j}u_j^{s_j-1}du_j\\
&=\idotsint_0^\infty \prod_{j=1}^r 
\frac{z_ju_j^{s_j-1}du_j}{e^{u_j+\cdots+u_r}-z_j}, 
\end{align*}
that is, 
\begin{equation}\label{eq:Li_int'}
\Li_{\bs}(\bz)=\prod_{j=1}^r\frac{1}{\Gamma(s_j)}
\idotsint_0^\infty \prod_{j=1}^r 
\frac{z_ju_j^{s_j-1}du_j}{e^{u_j+\cdots+u_r}-z_j}. 
\end{equation}
This gives an analytic continuation to the region 
\[(\bs,\bz)\in \bigl\{s\in\C\bigm| \Re(s)>0\bigr\}^r
\times(\C\setminus\R_{\geq1})^r. \]
Moreover, for each $\bz\in(\C\setminus\R_{\geq 1})^r$, 
there exists a neighborhood $K$ of $\bz$ and $\eps_0>0$ 
such that $e^{u_j+\cdots+u_r}-z'_j\ne 0$ for $j=1,\ldots,r$ 
whenever $(z'_1,\ldots,z'_r)\in K$, $0<\eps<\eps_0$ 
and $u_1,\ldots,u_r\in C_\eps$. 
If this is the case, we have 
\begin{equation}\label{eq:contour}
\idotsint_0^\infty \prod_{j=1}^r 
\frac{z_ju_j^{s_j-1}du_j}{e^{u_j+\cdots+u_r}-z_j}
=\prod_{j=1}^r\frac{1}{e^{2\pi is_j}-1}
\int_{(C_\eps)^r} \prod_{j=1}^r 
\frac{z_ju_j^{s_j-1}du_j}{e^{u_j+\cdots+u_r}-z_j}. 
\end{equation}
The formula \eqref{eq:Li_int} is deduced from 
\eqref{eq:Li_int'} and \eqref{eq:contour} because 
\[\Gamma(s_j)\Gamma(1-s_j)=\frac{\pi}{\sin \pi s_j}
=\frac{2\pi i}{e^{\pi is_j}-e^{-\pi is_j}}
=\frac{2\pi i e^{\pi i s_j}}{e^{2\pi i s_j}-1}. \]
It is easy to see that \eqref{eq:Li_int} gives 
a meromorphic continuation to 
$\C^r\times(\C\setminus\R_{\geq 1})^r$. 
The possible poles $s_j=1,2,3,\ldots$, 
which comes from the factor $\Gamma(1-s_j)$, 
are removable, since we already know 
the holomorphy on $\Re(s_j)>0$ from \eqref{eq:Li_int'}. 
\end{proof}

\begin{rem}
H.~Tsumura pointed out to the author that the above lemma is 
a special case of Komori's result \cite{Ko}. 
\end{rem}

Now we define the main object of this article. 

\begin{defn}
For $\bs=(s_1,\ldots,s_r),\bs'=(s'_1,\ldots,s'_r)\in\C^r$ 
with $\Re(s_j)>0$, we define 
\begin{equation}\label{eq:eta_def}
\eta(\bs';\bs)\coloneqq 
\prod_{j=1}^r\frac{1}{\Gamma(s_j)}\idotsint_0^\infty 
\frac{\Li_{\bs'}(1-e^{t_1+\cdots+t_r},\ldots,1-e^{t_r})}
{(1-e^{t_1+\cdots+t_r})\cdots(1-e^{t_r})}
\prod_{j=1}^r t_j^{s_j-1}dt_j. 
\end{equation}
\end{defn}

Let us discuss the convergence of the integral \eqref{eq:eta_def} and its analytic continuation. 
For this purpose, the following estimate is useful: 

\begin{lem}\label{lem:bound}
The function $(e^{(u+t)/2})/(e^{u}+e^{t}-1)$ is bounded on 
\[u,t\in D_\eps\coloneqq\{z\in\C\mid \Re(z)\ge -\eps,\, -\eps\le\Im(z)\le\eps\} \]
for sufficiently small $\eps>0$. 
\end{lem}
\begin{proof}
Put $x=e^u-1/2$, $y=e^t-1/2$. We bound the product 
\[\biggl|\frac{e^{(u+t)/2}}{e^{u}+e^{t}-1}\biggr|=\frac{\Re(x)+\Re(y)}{\abs{x+y}}
\cdot\frac{\abs{x}+\abs{y}}{\Re(x)+\Re(y)}\cdot\frac{\abs{xy}^{\frac12}}{\abs{x}+\abs{y}}
\cdot\biggl|\frac{e^u}{x}\biggr|^{\frac12}\cdot\biggl|\frac{e^t}{y}\biggr|^{\frac12}\]
factorwise. 

The first and third factors are bounded by $1$ and $1/2$, respectively. 
It is also easy to see that the fourth and fifth factors are bounded; 
indeed, $e^u/(e^u-1/2)$ is a continuous function on $D_\eps$, 
hence is bounded on any bounded region, and tends to $1$ with $\Re(u)\to\infty$. 

To bound the second factor, note that there is a constant $\theta=\theta_\eps>0$, 
depending only on $\eps$, such that $\abs{\arg x},\abs{\arg y}\le\theta$ holds for any $u,t\in D_\eps$. 
If $\eps>0$ is sufficiently small, $\theta$ also becomes arbitrarily small. 
In particular, we may assume that $\theta<\pi/2$. 
Then we have $\Re(x)\ge\abs{x}\cos\theta$, $\Re(y)\ge\abs{y}\cos\theta$ and hence 
\[\frac{\abs{x}+\abs{y}}{\Re(x)+\Re(y)}\le \frac{1}{\cos\theta}, \]
which gives a bound for the second factor. 
\end{proof}

To examine the convergence of the integral \eqref{eq:eta_def}, 
we substitute the expression \eqref{eq:Li_int} into it. Then we have 
\begin{equation}\label{eq:eta substitute}
\eta(\bs';\bs)=\prod_{j=1}^r\frac{\Gamma(1-s'_j)}{2\pi ie^{\pi is'_j}\Gamma(s_j)}
\int_{(\R_{>0})^r}\int_{(C_\eps)^r} 
\prod_{j=1}^r\frac{u_j^{s'_j-1}t_j^{s^{}_j-1}du_j\,dt_j}{e^{u_j+\cdots+u_r}+e^{t_j+\cdots+t_r}-1}.
\end{equation}
By Lemma \ref{lem:bound} applied to $D_{r\eps}$ in place of $D_\eps$, 
this integrand is bounded by a constant multiple of 
\[\Biggl|\prod_{j=1}^r\frac{u_j^{s'_j-1}t_j^{s^{}_j-1}}{e^{(u_j+\cdots+u_r+t_j+\cdots+t_r)/2}}\Biggr|
=\prod_{j=1}^r\Biggl|\frac{u_j^{s'_j-1}}{e^{ju_j/2}}\Biggr|
\cdot\prod_{j=1}^r\Biggl|\frac{t_j^{s^{}_j-1}}{e^{jt_j/2}}\Biggr|. \]
Hence the absolute convergence of the integral \eqref{eq:eta_def} is reduced to 
those of one variable integrals, which are elementary and well-known.  
Note that, if $\Re(s_j), \Re(s'_j)>0$, we may replace \eqref{eq:eta substitute} 
by a simpler and symmetric expression 
\begin{equation}\label{eq:eta_int'}
\eta(\bs';\bs)=\prod_{j=1}^r\frac{1}{\Gamma(s_j)\Gamma(s'_j)}
\idotsint_0^\infty \prod_{j=1}^r 
\frac{u_j^{s'_j-1}t_j^{s_j-1}du_j\,dt_j}{e^{u_j+\cdots+u_r}+e^{t_j+\cdots+t_r}-1}. 
\end{equation}

Moreover, the same estimate shows that we can transform each integral on $t_j\in\R_{>0}$ 
in \eqref{eq:eta substitute} to the integral along the contour $C_\eps$ to obtain 
\begin{equation}\label{eq:eta_int}
\eta(\bs';\bs)=\prod_{j=1}^r\frac{\Gamma(1-s_j)\Gamma(1-s'_j)}
{(2\pi i)^2e^{\pi(s_j+s'_j)}}\int_{(C_\eps)^{2r}}
\prod_{j=1}^r\frac{u_j^{s'_j-1}t_j^{s^{}_j-1}du_j\,dt_j}
{e^{u_j+\cdots+u_r}+e^{t_j+\cdots+t_r}-1}, 
\end{equation}
and that this integral is convergent for any $\bs,\bs'\in\C^r$. 
Therefore, we have shown the following: 

\begin{prop}\label{prop:eta}
The function $\eta(\bs';\bs)$ can be holomorphically continued 
to $\C^r\times\C^r$, and satisfies $\eta(\bs';\bs)=\eta(\bs;\bs')$. 
\end{prop}

\begin{rem}
Recall that the Euler-Zagier multiple zeta function 
\[\zeta(s_1,\ldots,s_r)=\sum_{0<n_1<\cdots<n_r}
\frac{1}{n_1^{s_1}\cdots n_r^{s_r}}\]
has the integral expression 
\[\zeta(s_1,\ldots,s_r)=\prod_{j=1}^r\frac{1}{\Gamma(s_j)}
\idotsint_0^\infty \prod_{j=1}^r 
\frac{u_j^{s_j-1}du_j}{e^{u_j+\cdots+u_r}-1}. \]
This formula, together with \eqref{eq:eta_int'}, suggests 
that our function $\eta(\bs';\bs)$ may be regarded as 
a `double multiple zeta function.' 
Furthermore, we may also consider a `multiple multiple zeta function' 
\[\eta(\bs_1,\ldots,\bs_l)=
\prod_{i=1}^l\prod_{j=1}^r\frac{1}{\Gamma(s_{ij})}
\idotsint_0^\infty \prod_{j=1}^r 
\frac{\prod_{i=1}^l t_{ij}^{s_{ij}-1}dt_{ij}}
{\sum_{i=1}^l e^{t_{ij}+\cdots+t_{ir}}-1}\]
for $\bs_i=(s_{i1},\ldots,s_{ir})$. 
In this paper, however, we don't pursue such a generalization 
for $l\geq 3$. 
\end{rem}

\section{Special values at positive integers}
From now on, we study the values of $\eta$ at positive integers. 
In particular, we are interested in the relationship between 
these values and the multiple zeta values. 
First let us recall some basic notation on the multiple zeta values. 

A finite sequence $\bk=(k_1,\ldots,k_r)$ of positive integers 
is called an \emph{index}. We put 
\[\wt{\bk}\coloneqq k_1+\cdots+k_r,\quad d(\bk)\coloneqq r, \]
and call them the \emph{weight} and the \emph{depth} of $\bk$, 
respectively. 

An index $\bk=(k_1,\ldots,k_r)$ is called \emph{admissible} if $k_r>1$ 
(or $r=0$, that is, $\bk$ is the empty index). 
When this is the case, the multiple zeta value $\zeta(\bk)$ is defined by 
the multiple series \eqref{eq:MZV}, 
and has the iterated integral expression 
\begin{equation}\label{eq:MZV_int}
\zeta(\bk)=\int_{(x_{ji})\in \Delta(\bk)}
\prod_{j=1}^r\frac{dx_{j1}}{1-x_{j1}}
\frac{dx_{j2}}{x_{j2}}\cdots\frac{dx_{jk_j}}{x_{jk_j}}. 
\end{equation}
Here $\Delta(\bk)$ is a domain of dimension $\wt{\bk}$ defined by 
\[\Delta(\bk)\coloneqq
\Biggl\{(x_{ji})_{\substack{j=1,\ldots,r,\\i=1,\ldots,k_j}}\Biggm| 
\begin{array}{r}
0<x_{11}<\cdots<x_{1k_1}<x_{21}<\cdots<x_{2k_2}\\
\cdots<x_{r1}<\cdots<x_{rk_r}<1
\end{array}\Biggr\}. \]

Now we return to the study of $\eta$-values. 
We start with an integral expression for $\eta(\bk;\bl)$ 
similar to \eqref{eq:MZV_int}, using domains of the form 
\[\nabla(\bk)\coloneqq
\Biggl\{(x_{ji})_{\substack{j=1,\ldots,r,\\i=1,\ldots,k_j}}\Biggm| 
\begin{array}{r}
1>x_{11}>\cdots>x_{1k_1}>x_{21}>\cdots>x_{2k_2}\\
\cdots>x_{r1}>\cdots>x_{rk_r}>0
\end{array}\Biggr\}. \]

\begin{prop}
For any indices $\bk=(k_1,\ldots,k_r),\bl=(l_1,\ldots,l_r)$ 
of depth $r>0$, we have 
\begin{equation}\label{eq:eta(k;l)_int}
\begin{split}
\eta(\bk;\bl)
=\int_{\substack{(x_{ji})\in\nabla(\bk)\\ (y_{ji})\in\nabla(\bl)}}
\prod_{j=1}^r\Biggl\{
\frac{dx_{j1}\,dy_{j1}}{1-x_{j1}y_{j1}}
&\frac{dx_{j2}}{1-x_{j2}}\cdots
\frac{dx_{jk_j}}{1-x_{jk_j}}\\
\times&\frac{dy_{j2}}{1-y_{j2}}\cdots
\frac{dy_{jl_j}}{1-y_{jl_j}}\Biggr\}. 
\end{split}
\end{equation}
\end{prop}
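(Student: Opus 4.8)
The plan is to transform the absolutely convergent Mellin-type integral \eqref{eq:eta_int'}---which is valid at the positive integer arguments $\bs'=\bk$, $\bs=\bl$, since then all the relevant real parts exceed $0$---directly into the iterated integral \eqref{eq:eta(k;l)_int}. Writing $U_j=u_j+\cdots+u_r$ and $T_j=t_j+\cdots+t_r$, the starting point is
\[\eta(\bk;\bl)=\prod_{j=1}^r\frac{1}{\Gamma(k_j)\Gamma(l_j)}\idotsint_0^\infty\prod_{j=1}^r\frac{u_j^{k_j-1}t_j^{l_j-1}\,du_j\,dt_j}{e^{U_j}+e^{T_j}-1}.\]
The conceptual key is the elementary identity $e^{U_j}+e^{T_j}-1=\bigl(e^{-U_j}e^{-T_j}\bigr)^{-1}\bigl(1-(1-e^{-U_j})(1-e^{-T_j})\bigr)$, which decouples the two exponentials and, after the substitutions below, is exactly what produces the coupling factor $1-x_{j1}y_{j1}$.

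The heart of the argument is a change-of-variables lemma that I would isolate and prove first, applied separately to the $u$- and the $t$-variables. For any integrable $g$,
\begin{multline*}
\prod_{j=1}^r\frac{1}{\Gamma(k_j)}\idotsint_0^\infty\Bigl(\prod_{j=1}^r u_j^{k_j-1}\Bigr)g\bigl(e^{-U_1},\ldots,e^{-U_r}\bigr)\prod_{j=1}^r du_j\\
=\int_{0<p_{11}<\cdots<p_{rk_r}<1}g(p_{11},\ldots,p_{r1})\prod_{j=1}^r\prod_{i=1}^{k_j}\frac{dp_{ji}}{p_{ji}},
\end{multline*}
where on the right the variables range over the full flag $0<p_{11}<\cdots<p_{1k_1}<p_{21}<\cdots<p_{rk_r}<1$ and $g$ depends only on the leading variable $p_{j1}$ of each block. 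This is proved blockwise by two standard moves: realizing the monomial as a simplex volume, $\frac{u_j^{k_j-1}}{(k_j-1)!}=\int_{e^{-U_j}<p_{j2}<\cdots<p_{jk_j}<e^{-U_{j+1}}}\prod_{i=2}^{k_j}\frac{dp_{ji}}{p_{ji}}$ (via $p_{ji}=e^{-v_i}$), and then substituting $p_{j1}=e^{-U_j}$, whose Jacobian supplies the remaining factors $\prod_j\frac{dp_{j1}}{p_{j1}}$ because $\bigwedge_j du_j=\bigwedge_j dU_j$ and $dU_j=-dp_{j1}/p_{j1}$. The nested bounds $e^{-U_j}=p_{j1}<\cdots<p_{jk_j}<e^{-U_{j+1}}=p_{j+1,1}$ chain across the blocks into the single flag.

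I would then apply this lemma to the $u$-integral (treating the $t_\nu$ as parameters, with $g=\prod_j(e^{U_j}+e^{T_j}-1)^{-1}$ viewed as a function of the $e^{-U_j}$), and its mirror image to the $t$-integral, introducing a second flag $0<q_{11}<\cdots<q_{rl_r}<1$ with $q_{j1}=e^{-T_j}$. The integrand becomes $\prod_j\bigl(p_{j1}^{-1}+q_{j1}^{-1}-1\bigr)^{-1}=\prod_j\frac{p_{j1}q_{j1}}{p_{j1}+q_{j1}-p_{j1}q_{j1}}$, and the factors $p_{j1}q_{j1}$ cancel against the measure factors $\frac{1}{p_{j1}}\frac{1}{q_{j1}}$; thus block $j$ contributes $\frac{dp_{j1}\,dq_{j1}}{p_{j1}+q_{j1}-p_{j1}q_{j1}}\prod_{i=2}^{k_j}\frac{dp_{ji}}{p_{ji}}\prod_{i=2}^{l_j}\frac{dq_{ji}}{q_{ji}}$. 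Finally the substitution $x_{ji}=1-p_{ji}$, $y_{ji}=1-q_{ji}$ carries the two flags onto $\nabla(\bk)$ and $\nabla(\bl)$, turns each $\frac{dp_{ji}}{p_{ji}}$ into $\frac{dx_{ji}}{1-x_{ji}}$, and---using $p_{j1}+q_{j1}-p_{j1}q_{j1}=1-(1-p_{j1})(1-q_{j1})=1-x_{j1}y_{j1}$---turns the coupled term into $\frac{dx_{j1}\,dy_{j1}}{1-x_{j1}y_{j1}}$, giving precisely \eqref{eq:eta(k;l)_int}.

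The calculations are light once the right substitution is found; the main obstacle is bookkeeping rather than any deep difficulty. Concretely, one must verify the lemma with its orientation/sign accounting (each $dU_j\mapsto-dp_{j1}/p_{j1}$ flips a sign, but so does reversing the direction of each flag, and these cancel), justify Fubini in interchanging the simplex-volume representation with the outer integrations (legitimate since the integrands are absolutely convergent in the relevant region), and check that the two flags genuinely remain independent, coupling entering only through the single factor $1-x_{j1}y_{j1}$ in each block. With the lemma in hand, the remainder is a direct and unwinding computation.
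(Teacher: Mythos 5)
Your proposal is correct and follows essentially the same route as the paper: both start from the absolutely convergent expression \eqref{eq:eta_int'} at $\bs'=\bk$, $\bs=\bl$, apply the substitution $x_j=1-e^{-(u_j+\cdots+u_r)}$, $y_j=1-e^{-(t_j+\cdots+t_r)}$ (your $p_{j1}=e^{-U_j}$ followed by $x_{j1}=1-p_{j1}$ is the same map), exploit the identity $e^{U_j}+e^{T_j}-1=\frac{1-x_jy_j}{(1-x_j)(1-y_j)}$, and unfold the factors $u_j^{k_j-1}/(k_j-1)!$ into iterated integrals over a simplex. The only difference is the order in which the simplex unfolding and the affine flip are performed, which is immaterial.
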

\begin{proof}
Since all variables are positive, 
we can use the integral \eqref{eq:eta_int'}: 
\[\eta(\bk;\bl)=\prod_{j=1}^r\frac{1}{\Gamma(k_j)\Gamma(l_j)}
\idotsint_0^\infty 
\prod_{j=1}^r\frac{u_j^{k_j-1}t_j^{l_j-1}du_j\,dt_j}
{e^{u_j+\cdots+u_r}+e^{t_j+\cdots+t_r}-1}. \]
Then we make the change of variables 
\[x_j=1-e^{-(u_j+\cdots+u_r)},\quad y_j=1-e^{-(t_j+\cdots+t_r)}, \]
which leads to 
\begin{multline}\label{eq:eta(k;l)_int1}
\eta(\bk;\bl)=\prod_{j=1}^r\frac{1}{\Gamma(k_j)\Gamma(l_j)}
\int_{\substack{1>x_1>\cdots>x_r>x_{r+1}=0\\ 
1>y_1>\cdots>y_r>y_{r+1}=0}}\\
\prod_{j=1}^r\biggl(\log\frac{1-x_{j+1}}{1-x_j}\biggr)^{k_j-1}
\biggl(\log\frac{1-y_{j+1}}{1-y_j}\biggr)^{l_j-1}
\frac{dx_j\,dy_j}{1-x_jy_j}. 
\end{multline}
Moreover, we have 
\begin{align*}
\frac{1}{\Gamma(k_j)}
\biggl(\log\frac{1-x_{j+1}}{1-x_j}\biggr)^{k_j-1}
&=\frac{1}{(k_j-1)!}
\biggl(\int_{x_{j+1}}^{x_j}\frac{dx}{1-x}\biggr)^{k_j-1}\\
&=\int_{x_j>x_{j2}>\cdots>x_{jk_j}>x_{j+1}}
\frac{dx_{j2}}{1-x_{j2}}\cdots\frac{dx_{jk_j}}{1-x_{jk_j}}, 
\end{align*}
and a similar formula for $\frac{1}{\Gamma(l_j)}
\bigl(\log\frac{1-y_{j+1}}{1-y_j}\bigr)^{l_j-1}$. 
If we substitute them into \eqref{eq:eta(k;l)_int1}, 
we get the result \eqref{eq:eta(k;l)_int} 
(with $x_j=x_{j1}$ and $y_j=y_{j1}$). 
\end{proof}

\begin{cor}
For indices $\bk=(k_1,\ldots,k_r), \bl=(l_1,\ldots,l_r)$,  
we have 
\begin{equation}\label{eq:eta_ser}
\eta(\bk;\bl)=\sum_{m_i,n_i:(*)}
\prod_{i=1}^{\wt{\bk}}
\frac{1}{m_i+m_{i+1}+\cdots+m_{\wt{\bk}}}
\prod_{i=1}^{\wt{\bl}}
\frac{1}{n_i+n_{i+1}+\cdots+n_{\wt{\bl}}}, 
\end{equation}
where the summation is taken over positive integers 
$m_1,\ldots,m_{\wt{\bk}}$ and $n_1,\ldots,n_{\wt{\bl}}$ 
satisfying 
\begin{multline}
m_1=n_1,\ m_{k_1+1}=n_{l_1+1},\ 
m_{k_1+k_2+1}=n_{l_1+l_2+1}, \ldots\\
\ldots,m_{k_1+\cdots+k_{r-1}+1}=n_{l_1+\cdots+l_{r-1}+1}. 
\tag{$*$}
\end{multline}
\end{cor}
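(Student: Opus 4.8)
The goal is to derive the series expression \eqref{eq:eta_ser} from the integral expression \eqref{eq:eta(k;l)_int} established in the preceding proposition.

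The plan is to start from the iterated-integral formula \eqref{eq:eta(k;l)_int} and expand every differential form as a geometric series, then integrate term by term. Concretely, the form attached to the $j$-th block of the $x$-variables is
\[\frac{dx_{j1}\,dy_{j1}}{1-x_{j1}y_{j1}}\cdot
\frac{dx_{j2}}{1-x_{j2}}\cdots\frac{dx_{jk_j}}{1-x_{jk_j}}
\cdot\frac{dy_{j2}}{1-y_{j2}}\cdots\frac{dy_{jl_j}}{1-y_{jl_j}}.\]
Each factor $\frac{dx}{1-x}$ expands as $\sum_{a\geq 1}x^{a-1}\,dx$, and the mixed factor $\frac{1}{1-x_{j1}y_{j1}}$ expands as $\sum_{a\geq 1}(x_{j1}y_{j1})^{a-1}$. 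Thus I would attach to each of the $\wt{\bk}$ many $x$-differentials an index $m_i\geq 1$ and to each of the $\wt{\bl}$ many $y$-differentials an index $n_i\geq 1$, in such a way that the single mixed numerator simultaneously carries one of the $m$'s and one of the $n$'s, forced to be equal; this is precisely the source of the coupling constraints in $(*)$. The integration domain is the product of simplices $\nabla(\bk)\times\nabla(\bl)$, over which a monomial integrates to a product of reciprocals of partial sums of the exponents, exactly reproducing the two products of $\frac{1}{m_i+\cdots+m_{\wt{\bk}}}$ and $\frac{1}{n_i+\cdots+n_{\wt{\bl}}}$ in \eqref{eq:eta_ser}.

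In more detail, the integral of a monomial $\prod x_{ji}^{a_{ji}-1}$ over the decreasing simplex $\nabla(\bk)$ is handled by integrating the innermost variable first: if $1>x_1>\cdots>x_N>0$ and we integrate $\prod_i x_i^{c_i-1}$, then integrating $x_N$ from $0$ to $x_{N-1}$ gives a factor $1/c_N$, the next integration gives $1/(c_{N-1}+c_N)$, and so on, yielding $\prod_i 1/(c_i+c_{i+1}+\cdots+c_N)$. Relabelling the exponents along the total order on the $x$-variables as $m_1,\ldots,m_{\wt{\bk}}$ (and the $y$-exponents as $n_1,\ldots,n_{\wt{\bl}}$) produces the two displayed products. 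The constraints $(*)$ arise because, in each block $j$, the leading variables $x_{j1}$ and $y_{j1}$ appear together in the single factor $1/(1-x_{j1}y_{j1})$, so their exponents must coincide; these leading variables sit at positions $k_1+\cdots+k_{j-1}+1$ in the $x$-order and $l_1+\cdots+l_{j-1}+1$ in the $y$-order, giving exactly the listed equalities.

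The main point requiring care is the bookkeeping of indices: I must verify that expanding the mixed factor $1/(1-x_{j1}y_{j1})$ contributes the \emph{same} exponent to both $x_{j1}$ and $y_{j1}$, while the pure factors contribute independent exponents, and that after relabelling along each total order the coupled positions are precisely $m_{k_1+\cdots+k_{j-1}+1}=n_{l_1+\cdots+l_{j-1}+1}$. The term-by-term integration itself is routine and justified by absolute convergence within the simplex, so the only genuine obstacle is confirming that the combinatorial correspondence between expansion exponents and the partial-sum denominators matches \eqref{eq:eta_ser} exactly, including the coincidence constraints $(*)$.
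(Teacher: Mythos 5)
Your proposal is correct and follows essentially the same route as the paper: expand $\frac{1}{1-x_{j1}y_{j1}}$ and each $\frac{1}{1-x_{ji}}$, $\frac{1}{1-y_{ji}}$ as geometric series, relabel the variables along the total orders on $\nabla(\bk)$ and $\nabla(\bl)$, and integrate the resulting monomials term by term to produce the partial-sum denominators, with the constraints $(*)$ coming from the coupled factor at the positions $k_1+\cdots+k_{j-1}+1$ and $l_1+\cdots+l_{j-1}+1$. The bookkeeping you flag as the main point of care is exactly what the paper's proof carries out, and your account of it is accurate.
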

\begin{proof}
We expand all factors of the integrand of \eqref{eq:eta(k;l)_int} by 
\[\frac{1}{1-x_{j1}y_{j1}}=\sum_{m=1}^\infty x_{j1}^{m-1}y_{j1}^{m-1},\ 
\frac{1}{1-x_{ji}}=\sum_{m=1}^\infty x_{ji}^{m-1},\ 
\frac{1}{1-y_{ji}}=\sum_{n=1}^\infty y_{ji}^{n-1}. \]
Then, with a renumbering of variables, the integral becomes 
\[\int_{\substack{1>x_1>\cdots>x_{\wt{\bk}}>0\\ 
1>y_1>\cdots>y_{\wt{\bl}}>0}}
\sum_{m_i,n_i:(*)}
\prod_{i=1}^{\wt{\bk}}x_i^{m_i-1}dx_i 
\prod_{i=1}^{\wt{\bl}}y_i^{n_i-1}dy_i. \]
By exchanging the integral and the summation, 
and by integrating repeatedly, we obtain the formula \eqref{eq:eta_ser}. 
\end{proof}

Let $\bX=(X_1,\ldots,X_r)$ and $\bY=(Y_1,\ldots,Y_r)$ be 
$r$-tuples of indeterminates, and define 
the generating function for the values $\eta(\bk;\bl)$ by 
\begin{equation}\label{eq:F_r}
F_r(\bX;\bY)=\sum_{\bk,\bl\in(\Z_{>0})^r}
\eta(\bk;\bl)X_1^{k_1-1}\cdots X_r^{k_r-1}
Y_1^{l_1-1}\cdots Y_r^{l_r-1}. 
\end{equation}

\begin{prop}
We have 
\begin{equation}\label{eq:F_r_int}
F_r(\bX,\bY)
=\int_{\substack{1>x_1>\cdots>x_r>0\\ 1>y_1>\cdots>y_r>0}}
\prod_{j=1}^r(1-x_j)^{X_{j-1}-X_j}(1-y_j)^{Y_{j-1}-Y_j}
\frac{dx_j\,dy_j}{1-x_jy_j}, 
\end{equation}
where $X_0=Y_0=0$. 
\end{prop}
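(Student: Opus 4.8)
The plan is to feed the intermediate integral representation \eqref{eq:eta(k;l)_int1} into the definition \eqref{eq:F_r} of the generating function and then carry out the summation term by term. Writing $x_j=x_{j1}$, $y_j=y_{j1}$ and $x_{r+1}=y_{r+1}=0$, that formula expresses $\eta(\bk;\bl)$ as an integral over the domain $\{1>x_1>\cdots>x_r>0,\ 1>y_1>\cdots>y_r>0\}$ of
\[
\prod_{j=1}^r\frac{1}{(k_j-1)!}\biggl(\log\frac{1-x_{j+1}}{1-x_j}\biggr)^{k_j-1}
\frac{1}{(l_j-1)!}\biggl(\log\frac{1-y_{j+1}}{1-y_j}\biggr)^{l_j-1}
\frac{dx_j\,dy_j}{1-x_jy_j},
\]
since $\Gamma(k_j)=(k_j-1)!$. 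The monomial $X_1^{k_1-1}\cdots Y_r^{l_r-1}$ attached to this term in \eqref{eq:F_r} then combines with the log-powers, so that each index $k_j$ produces the factor $\bigl(X_j\log\frac{1-x_{j+1}}{1-x_j}\bigr)^{k_j-1}/(k_j-1)!$.

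First I would sum over all $\bk,\bl\in(\Z_{>0})^r$, interpreting the identity as one in the ring of formal power series in $X_1,\ldots,X_r,Y_1,\ldots,Y_r$ (equivalently, for $\bX,\bY$ in a small polydisc where absolute convergence justifies exchanging $\sum$ and $\int$). The sum factors over $j$, and each single-variable sum is an exponential series:
\[
\sum_{k_j=1}^\infty \frac{1}{(k_j-1)!}\biggl(X_j\log\frac{1-x_{j+1}}{1-x_j}\biggr)^{k_j-1}
=\biggl(\frac{1-x_{j+1}}{1-x_j}\biggr)^{X_j},
\]
and similarly for $Y_j$. This turns $F_r(\bX,\bY)$ into the integral of $\prod_{j=1}^r\bigl(\frac{1-x_{j+1}}{1-x_j}\bigr)^{X_j}\bigl(\frac{1-y_{j+1}}{1-y_j}\bigr)^{Y_j}\frac{dx_j\,dy_j}{1-x_jy_j}$ over the same domain.

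The last step is to reindex the telescoping product $\prod_{j=1}^r(1-x_{j+1})^{X_j}(1-x_j)^{-X_j}$. I would collect, for each $m$, the exponent of $(1-x_m)$: the numerator contributes $X_{m-1}$ (from $j=m-1$) and the denominator contributes $-X_m$ (from $j=m$), so the total exponent is $X_{m-1}-X_m$. Here the convention $X_0=0$ absorbs the absent numerator term at $m=1$, while $x_{r+1}=0$ kills the would-be numerator factor at $m=r+1$; the same bookkeeping applies to the $y$-variables. This yields exactly the integrand of \eqref{eq:F_r_int}.

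The only genuine obstacle is the legitimacy of the term-by-term summation, and the cleanest way around it is the formal-power-series viewpoint: matching the coefficient of $\prod_j X_j^{k_j-1}Y_j^{l_j-1}$ on the right-hand side recovers precisely the integral \eqref{eq:eta(k;l)_int1} for $\eta(\bk;\bl)$, so the two sides agree coefficientwise. Everything else is the routine telescoping computation sketched above.
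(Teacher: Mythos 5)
Your proof is correct and follows essentially the same route as the paper: substitute the intermediate integral representation \eqref{eq:eta(k;l)_int1} into the definition \eqref{eq:F_r}, sum the exponential series to get $\bigl(\frac{1-x_{j+1}}{1-x_j}\bigr)^{X_j}$, and telescope. Your extra remarks on the formal-power-series justification and the explicit exponent bookkeeping are fine but not a different method.
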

\begin{proof}
We substitute the integral expression \eqref{eq:eta(k;l)_int1} 
of $\eta(\bk;\bl)$ into the definition \eqref{eq:F_r} of $F_r(\bX;\bY)$, 
and take the summation over $\bk$ and $\bl$ using 
\begin{align*}
\sum_{k_j=1}^\infty\frac{1}{\Gamma(k_j)}
\biggl(\log\frac{1-x_{j+1}}{1-x_j}\biggr)^{k_j-1}X_j^{k_j-1}
&=\exp\biggl(X_j\log\frac{1-x_{j+1}}{1-x_j}\biggr)\\
&=\biggl(\frac{1-x_{j+1}}{1-x_j}\biggr)^{X_j}
\end{align*}
and similar for $y_j$. Then we obtain 
\[F_r(\bX,\bY)
=\int_{\substack{1>x_1>\cdots>x_r>x_{r+1}=0\\ 
1>y_1>\cdots>y_r>y_{r+1}=0}}
\prod_{j=1}^r\biggl(\frac{1-x_{j+1}}{1-x_j}\biggr)^{X_j}
\biggl(\frac{1-y_{j+1}}{1-y_j}\biggr)^{Y_j}
\frac{dx_j\,dy_j}{1-x_jy_j}, \]
which is equal to \eqref{eq:F_r_int}. 
\end{proof}

\section{Relationship with multiple zeta values}
In this section, we discuss the relation of the values $\eta(\bk;\bl)$ 
with the multiple zeta values $\zeta(\bk)$. 
The first thing to be noticed is that the former can be written in terms of the latter. 

\begin{thm}\label{thm:eta and zeta}
Let $\calY$ (resp.~$\calZ$) denote the $\Q$-linear subspaces of $\R$ 
spanned by $\eta(\bk;\bl)$ for all indices $\bk$ and $\bl$ of the same depth 
(resp. spanned by $\zeta(\bk)$ for all admissible indices $\bk$). 
Then we have $\calY\subset\calZ$. 
\end{thm}
\begin{proof}
Let us make the change of variables $y_{ji}\rightsquigarrow y_{ji}^{-1}$ 
in the integral \eqref{eq:eta(k;l)_int}. Then we have 
\begin{equation}\label{eq:eta(k;l)_SelbergInt}
\begin{split}
\eta(\bk;\bl)=\int_{\substack{(x_{ji})\in\nabla(\bk)\\ (y_{ji})\in\Delta'(\bl)}}
\prod_{j=1}^r\Biggl\{
\frac{dx_{j1}\,dy_{j1}}{(y_{j1}-x_{j1})y_{j1}}
\frac{dx_{j2}}{1-x_{j2}}\cdots
\frac{dx_{jk_j}}{1-x_{jk_j}}&\\
\times\frac{dy_{j2}}{(y_{j2}-1)y_{j2}}\cdots
\frac{dy_{jl_j}}{(y_{jl_j}-1)y_{jl_j}}&\Biggr\}, 
\end{split}
\end{equation}
where 
\[\Delta'(\bl)\coloneqq
\Biggl\{(y_{ji})_{\substack{j=1,\ldots,r,\\i=1,\ldots,l_j}}\Biggm| 
\begin{array}{r}
1<y_{11}<\cdots<y_{1l_1}<y_{21}<\cdots<y_{2l_2}\\
\cdots<y_{r1}<\cdots<y_{rl_r}
\end{array}\Biggr\}. \]
This is a period integral on the moduli space $\overline{\mathfrak{M}}_{0,n}$ 
of genus zero curves with $n$ marked points, where $n=\wt{\bk}+\wt{\bl}+3$. 
Hence we can apply the theorem of Brown \cite[Theorem 1.1]{Br} 
to deduce that $\eta(\bk;\bl)$ is a $\Q[2\pi i]$-linear combination of multiple zeta values. 
Since $\eta(\bk;\bl)$ is a real number, in fact, this is a $\Q[(2\pi i)^2]$-linear combination. 
Thus we have $\eta(\bk;\bl)\in\calZ$, since $(2\pi i)^2=-24\zeta(2)$ and 
$\calZ$ is a $\Q$-subalgebra of $\R$. 
\end{proof}

\begin{rem}\label{rem:Y subset Z}
In the first version of this paper, whether $\eta(\bk;\bl)\in\calZ$ or not 
was asked as an open question. 
Then E.~Panzer, who read it on the arXiv, immediately communicated to the author 
that it can be shown by using Brown's result as above. 
More precisely, he claimed that the inclusion $\calY_w\subset\calZ_w$ can be obtained 
for any integer $w\ge 0$, 
where $\calY_w$ (resp.\ $\calZ_w$) denotes the space spanned by $\eta(\bk;\bl)$ 
with $\wt{\bk}+\wt{\bl}=w$ (resp.\ spanned by $\zeta(\bk)$ with $\wt{\bk}=w$). 
However, the author could not confirm this statement 
by simply applying Brown's result as in the above proof. 

Recently, K.~Ito announced that he and Sato showed the inclusion 
$\calY_w\subset\calZ_w$ \cite[Remark 4.2]{Ito}. 

Brown's work \cite{Br} actually give an algorithm to express the value $\eta(\bk;\bl)$ 
as a linear combination of multiple zeta values for any given indices $\bk$ and $\bl$ 
(this was also noted by Panzer). 
Such an algorithm may be available also from the method of Ito and Sato. 
On the other hand, as far as the author knows, any explicit formula of such expression 
for general indices is still unknown. 
See \eqref{eq:r=1} and \eqref{eq:r=k=l} below for such formulas in two (very) special cases. 
\end{rem}

Next we show a `sum formula' for $\eta$-values. 
First let us introduce some notation on indices. 
\begin{defn}
\begin{enumerate}
\item For an index $\bk=(k_1,\ldots,k_r)$ of weight $k$, put 
\[J(\bk)\coloneqq\{k_1,k_1+k_2,\ldots,k_1+\cdots+k_{r-1}\}
\subset \{1,2,\ldots,k-1\}. \]
\item We say that $\bk$ is a \emph{refinement} of $\bk'$, 
and denote $\bk\succeq\bk'$, 
if $\wt{\bk}=\wt{\bk'}$ and $J(\bk)\supset J(\bk')$. 
\item For a formal linear combination 
$\alpha=\sum_{\bk}a_\bk \bk$ of (finitely many) 
admissible indices, we linearly extend the function $\zeta$, 
i.e., set $\zeta(\alpha)=\sum_{\bk}a_\bk\zeta(\bk)$. 
This `linear extension' principle also applies to operations below. 
\item For an index $\bk$, we denote by $\bk^\star$ 
the formal sum $\sum_{\bk'\preceq\bk}\bk'$ of 
all indices $\bk'$ of which $\bk$ is a refinement. 
\item For indices $\bk$ and $\bl$, we denote $\bk*\bl$ 
the \emph{harmonic product} of $\bk$ and $\bl$. 
It is a formal sum of indices defined inductively by 
\begin{align*}
\varnothing *\bk&=\bk*\varnothing=\bk,\\
(k_1,\ldots,k_r)*(l_1,\ldots,l_s)
&=\bigl((k_1,\ldots,k_{r-1})*(l_1,\ldots,l_s),k_r\bigr)\\
&+\bigl((k_1,\ldots,k_r)*(l_1,\ldots,l_{s-1}),l_s\bigr)\\
&+\bigl((k_1,\ldots,k_{r-1})*(l_1,\ldots,l_{s-1}),k_r+l_s\bigr), 
\end{align*}
where $\varnothing$ denotes the unique index of depth $0$. 
\item For indices $\bk=(k_1,\ldots,k_r)$ and $\bl=(l_1,\ldots,l_s)$ 
with $r,s>0$, we set 
\[\bk\circledast\bl\coloneqq 
\bigl((k_1,\ldots,k_{r-1})*(l_1,\ldots,l_{s-1}),k_r+l_s\bigr). \]
\end{enumerate}
\end{defn}

\begin{thm}\label{thm:sum formula}
Denote by $I(k,r)$ the set of all indices of weight $k$ and depth $r$. 
Then we have, for any positive integers $k$, $l$ and $r$, 
\begin{equation}\label{eq:sum formula}
\sum_{\bk\in I(k,r)}\sum_{\bl\in I(l,r)}\eta(\bk;\bl)
=\zeta\bigl((\underbrace{2,\ldots,2}_r)\circledast 
(\underbrace{1,\ldots,1}_{k-r},0)^\star\circledast 
(\underbrace{1,\ldots,1}_{l-r},0)^\star\bigr). 
\end{equation}
\end{thm}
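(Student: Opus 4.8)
The plan is to read off the left-hand side of \eqref{eq:sum formula} as a single coefficient of the generating function $F_r$ of \eqref{eq:F_r}. Putting $X_1=\cdots=X_r=X$ and $Y_1=\cdots=Y_r=Y$, the monomial $X_1^{k_1-1}\cdots X_r^{k_r-1}$ collapses to $X^{\wt{\bk}-r}$, so the coefficient of $X^{k-r}Y^{l-r}$ in $F_r(X,\ldots,X;Y,\ldots,Y)$ is exactly $\sum_{\bk\in I(k,r)}\sum_{\bl\in I(l,r)}\eta(\bk;\bl)$. On the integral side \eqref{eq:F_r_int}, this specialization kills every exponent $X_{j-1}-X_j$ except at $j=1$, where $X_0-X_1=-X$; hence
\[F_r(X,\ldots,X;Y,\ldots,Y)=\int_{\substack{1>x_1>\cdots>x_r>0\\1>y_1>\cdots>y_r>0}}(1-x_1)^{-X}(1-y_1)^{-Y}\prod_{j=1}^r\frac{dx_j\,dy_j}{1-x_jy_j}.\]

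I would then turn this integral into an explicit series. Expanding $\frac{1}{1-x_jy_j}=\sum_{n_j\geq1}(x_jy_j)^{n_j-1}$ and integrating the two nested simplices from the innermost variable outward, the inner integrals produce $\frac1{N_2\cdots N_r}$ (with $N_j\coloneqq n_j+\cdots+n_r$), while the outermost integral carries all the $X$-dependence through $\int_0^1(1-x_1)^{-X}x_1^{N_1-1}\,dx_1=B(N_1,1-X)=\frac1{N_1}\prod_{i=1}^{N_1}(1-X/i)^{-1}$. Extracting the coefficient of $X^{k-r}$ gives $\frac1{N_1\cdots N_r}\,\sigma_{k-r}(N_1)$, where $\sigma_m(N)\coloneqq\sum_{1\leq i_1\leq\cdots\leq i_m\leq N}\frac1{i_1\cdots i_m}$, and symmetrically for $Y$. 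Because the coupling forces the two chains to share the same indices $n_j$, multiplying the $x$- and $y$-contributions and rewriting $\sum_{n_j\geq1}$ as $\sum_{N_1>\cdots>N_r\geq1}$ yields
\[\sum_{\bk\in I(k,r)}\sum_{\bl\in I(l,r)}\eta(\bk;\bl)=\sum_{N_1>\cdots>N_r\geq1}\frac1{N_1^2\cdots N_r^2}\,\sigma_{k-r}(N_1)\,\sigma_{l-r}(N_1).\]

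It remains to recognize this series as the right-hand side of \eqref{eq:sum formula}, and this combinatorial identification is where the real work lies. The factor $\frac1{N_1^2\cdots N_r^2}$ summed over $N_1>\cdots>N_r$ is the backbone $\zeta(2,\ldots,2)$; the star-tail $\sigma_m(N_1)$ is the series attached by $(\underbrace{1,\ldots,1}_m,0)^\star$, whose trailing $0$ is a placeholder block that $\circledast$ merges into the top index $N_1$ (the last entry of the backbone); and the appearance of \emph{two} tails sharing the single index $N_1$, giving $\sigma_{k-r}(N_1)\sigma_{l-r}(N_1)$, is precisely what the harmonic product $*$ inside $\circledast$ encodes through the stuffle relation. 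Concretely one unwinds $(\underbrace{1,\ldots,1}_m,0)^\star=\sum_{S\subseteq\{1,\ldots,m\}}\bk'_S$ (all compositions of $m$ with a possible trailing $0$) together with $\bk\circledast\bl=\bigl((k_1,\ldots,k_{r-1})*(l_1,\ldots,l_{s-1}),k_r+l_s\bigr)$, then expands each resulting $\zeta$ by \eqref{eq:MZV} and collects terms. The main obstacle is exactly this bookkeeping: checking that the coarsening sum defining $\star$, the combine-last rule and harmonic product inside $\circledast$, and the sharing of $N_1$ by both tails conspire to reproduce $\sigma_{k-r}(N_1)\sigma_{l-r}(N_1)$ and nothing else. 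The case $r=1,\ k=l=2$, where $\zeta\bigl((2)\circledast(1,0)^\star\circledast(1,0)^\star\bigr)=\zeta(4)+2\zeta(1,3)+2\zeta(1,1,2)+\zeta(2,2)$ matches $\sum_N H_N^2/N^2$ after expanding $H_N^2$ by the stuffle, is a reliable guide to the general pattern. Finally, every index produced on the right ends in $k_r+l_s=2+(\cdots)\geq2$ and so is admissible of weight $2r+(k-r)+(l-r)=k+l$, placing the common value in $\calZ_{k+l}$; it lies in $\calY_{k+l}$ by definition, and it is nonzero because the displayed series has strictly positive terms.
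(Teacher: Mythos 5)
Your proposal follows essentially the same route as the paper: specialize the generating function $F_r(X,\ldots,X;Y,\ldots,Y)$, use the integral expression \eqref{eq:F_r_int}, expand $\frac{1}{1-x_jy_j}$ and evaluate the outer integrals via $B(1-X,m)$ to obtain exactly the series $\sum_{m_1>\cdots>m_r>0}\frac{1}{m_1^2\cdots m_r^2}\sigma_{k-r}(m_1)\sigma_{l-r}(m_1)$, which is the paper's \eqref{eq:sum formula_RHS}. The final combinatorial identification of that series with $\zeta\bigl((2,\ldots,2)\circledast(1,\ldots,1,0)^\star\circledast(1,\ldots,1,0)^\star\bigr)$, which you flag as the remaining bookkeeping, is treated in the paper at the same level of detail (it is simply asserted as the explicit series expansion of the right-hand side), so your argument is correct and complete to the same standard.
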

\begin{proof}
First note that, while $(\underbrace{1,\ldots,1}_{k-r},0)$ and 
$(\underbrace{1,\ldots,1}_{l-r},0)$ in the right hand side contain $0$, 
the formal operations work well and produce a formal sum of 
admissible indices of weight $k+l$. 
Explicitly, it is given by the multiple series 
\begin{equation}\label{eq:sum formula_RHS}
\sum_{\substack{m_1>\cdots>m_r>0\\ 
0<a_1\leq\cdots\leq a_{k-r}\leq m_1\\ 
0<b_1\leq\cdots\leq b_{l-r}\leq m_1}}
\frac{1}{m_1^2\cdots m_r^2a_1\cdots a_{k-r}b_1\cdots b_{l-r}}. 
\end{equation}

Let us compute the left hand side of \eqref{eq:sum formula}. 
Since 
\[F_r(X,\ldots,X;Y,\ldots,Y)=\sum_{k,l\geq r}
\Biggl(\sum_{\bk\in I(k,r),\bl\in I(l,r)}\eta(\bk;\bl)\Biggr)X^{k-r}Y^{l-r},\]
it is sufficient to compute this generating function. 
By \eqref{eq:F_r_int}, we have 
\[F_r(X,\ldots,X;Y,\ldots,Y)
=\int_{\substack{1>x_1>\cdots>x_r>0\\ 1>y_1>\cdots>y_r>0}}
(1-x_1)^{-X}(1-y_1)^{-Y}
\prod_{j=1}^r\frac{dx_j\,dy_j}{1-x_jy_j}. \]
Using the expansion 
$\frac{1}{1-x_jy_j}=\sum_{n_j=1}^\infty (x_jy_j)^{n_j-1}$ 
and integrating with respect to $x_r,y_r,\ldots,x_2,y_2$, 
it can be rewritten as 
\[\sum_{m_1>\cdots>m_r>0}\frac{1}{m_2^2\cdots m_r^2}
\int_0^1(1-x)^{-X}x^{m_1-1}dx\int_0^1(1-y)^{-Y}y^{m_1-1}dy. \]
Then we note that 
\begin{align*}
\int_0^1(1-x)^{-X}x^{m-1}dx
&=B(1-X,m)=\frac{\Gamma(1-X)\Gamma(m)}
{\Gamma(1-X+m)}\\
&=\frac{(m-1)!}{(1-X)(2-X)\cdots(m-X)}\\
&=\frac{1}{m}\prod_{a=1}^m\biggl(1-\frac{X}{a}\biggr)^{-1}
=\frac{1}{m}\prod_{a=1}^m\sum_{n=0}^\infty \frac{X^n}{a^n}\\
&=\frac{1}{m}\sum_{k=0}^\infty X^k
\sum_{0<a_1\leq\cdots\leq a_k\leq m}
\frac{1}{a_1\cdots a_k} 
\end{align*}
to obtain 
\begin{align*}
&F_r(X,\ldots,X;Y,\ldots,Y)\\
&=\sum_{k,l\geq 0}X^{k}Y^{l}
\sum_{m_1>\cdots>m_r>0}\frac{1}{m_1^2\cdots m_r^2}
\sum_{\substack{0<a_1\leq\cdots\leq a_k\leq m_1\\ 
0<b_1\leq\cdots\leq b_l\leq m_1}}
\frac{1}{a_1\cdots a_k b_1\cdots b_l}. 
\end{align*}
Hence the coefficient of $X^{k-l}Y^{l-r}$ 
coincides with the series \eqref{eq:sum formula_RHS}, 
and the result follows. 
\end{proof}

\begin{ex}\label{ex:explicit examples}
\begin{enumerate}
\item When $r=1$, \eqref{eq:sum formula} says that 
\begin{equation}\label{eq:r=1}
\begin{split}
\eta(k;l)
&=\sum_{\substack{m>0\\
0<a_1\leq\cdots\leq a_{k-1}\leq m\\
0<b_1\leq\cdots\leq b_{l-1}\leq m}}
\frac{1}{m^2a_1\cdots a_{k-1} b_1\cdots b_{l-1}}\\
&=\zeta\bigl((\underbrace{1,\ldots,1}_k)^\star\circledast
(\underbrace{1,\ldots,1}_l)^\star\bigr)
\end{split}
\end{equation}
for any $k,l>0$. For example, 
\begin{align*}
\eta(1;1)&=\zeta(2),\\
\eta(2;1)&=\zeta(1,2)+\zeta(3),\\
\eta(3;1)&=\zeta(1,1,2)+\zeta(2,2)+\zeta(1,3)+\zeta(4),\\
\eta(2;2)&=2\zeta(1,1,2)+\zeta(2,2)+2\zeta(1,3)+\zeta(4). 
\end{align*}
\item When $r=k=l$, \eqref{eq:sum formula} says that 
\begin{equation}\label{eq:r=k=l}
\eta(\underbrace{1,\ldots,1}_r;\underbrace{1,\ldots,1}_r)
=\zeta(\underbrace{2,\ldots,2}_r). 
\end{equation}
\end{enumerate}
\end{ex}

\appendix 
\section{Values of $\eta(k_1,\ldots,k_r;l)$ for positive integers $k_1,\ldots,k_r,l$}
The formula \eqref{eq:r=1} for $\eta(k;l)$, 
which is a special case of \eqref{eq:sum formula}, 
can also be generalized to another direction, namely, 
a formula for values $\eta(k_1,\ldots,k_r;l)$ of 
the function \eqref{eq:eta(r;1)}. 
Such a formula was first found by M.~Kaneko as a conjecture. 
In this appendix, we prove it. 

To state the formula, we recall the notion of the Hoffman dual $\bk^\vee$ 
of an index $\bk$.  This is the unique index 
such that $\wt{\bk^\vee}=\wt{\bk}$ and 
$J(\bk^\vee)=\{1,\ldots,\wt{\bk}-1\}\setminus J(\bk)$. 

\begin{thm}\label{thm:(r,1) formula}
For a nonempty index $\bk=(k_1,\ldots,k_r)$ and 
an integer $l>0$, we have 
\begin{equation}\label{eq:(r,1) formula}
\eta(\bk;l)=(-1)^{d(\bk^\vee)}\sum_{\bk'\succeq\bk^\vee}
(-1)^{d(\bk')}\zeta\bigl((\bk')^\star\circledast
(\underbrace{1,\ldots,1}_l)^\star\bigr). 
\end{equation}
\end{thm}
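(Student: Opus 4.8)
The plan is to reduce $\eta(\bk;l)$ to a one–dimensional integral of one–variable multiple polylogarithms and then expand it as a $\Q$–linear combination of multiple zeta values. I would start from the defining integral \eqref{eq:eta(r;1)} and perform the change of variables $X=1-e^{-t}$. This sends the argument $1-e^{t}$ of $\Li_\bk$ to $\frac{X}{X-1}$, turns $\frac{1}{1-e^{t}}$ into $-\frac{1-X}{X}$, and (since $\frac{1}{(l-1)!}(\log\frac{1}{1-X})^{l-1}$ is the one–variable polylog $\Li_{1,\ldots,1}(X)$ with $l-1$ ones) turns $\frac{t^{l-1}}{\Gamma(l)}\,dt$ into $\Li_{1,\ldots,1}(X)\,\frac{dX}{1-X}$. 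After the cancellations these combine into the clean representation
\[\eta(\bk;l)=-\int_0^1 \Li_\bk\!\Bigl(\tfrac{X}{X-1}\Bigr)\,\Li_{1,\ldots,1}(X)\,\frac{dX}{X}.\]
A technical point to check here is that, although $1-e^{t}$ runs off to $-\infty$, for negative real arguments $\Li_\bk$ is given by the convergent real iterated integral, so the substitution is legitimate.

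Next I would make the integrand algebraic. Using Chen's iterated–integral expression for $\Li_\bk$ and substituting $u=\frac{X}{X-1}$ inside it, the forms $\frac{du}{1-u}$ and $\frac{du}{u}$ become $-\frac{dX}{1-X}$ and $\frac{dX}{X}+\frac{dX}{1-X}$; expanding the latter over the non-initial letters of each block gives
\[\Li_\bk\!\Bigl(\tfrac{X}{X-1}\Bigr)=(-1)^{d(\bk)}\sum_{\bk'\succeq\bk}\Li_{\bk'}(X),\]
a signed sum over refinements of $\bk$. Substituting back, each summand $\int_0^1\Li_{\bk'}(X)\Li_{1,\ldots,1}(X)\frac{dX}{X}$ is an honest iterated integral over a simplex, which I would evaluate by expressing the product $\Li_{\bk'}(X)\Li_{1,\ldots,1}(X)$ through the \emph{shuffle} product of their iterated-integral words (not the harmonic product --- this distinction is essential), appending the final form $\frac{dX}{X}$, and reading off an explicit $\Z$–linear combination of admissible multiple zeta values. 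This already yields $\eta(\bk;l)\in\calZ_{\wt{\bk}+l}$, and for $r=1$ one checks that it collapses to \eqref{eq:r=1}.

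The hard part is to match this shuffle-side expression with the right-hand side of \eqref{eq:(r,1) formula}, which is organized instead around the Hoffman dual $\bk^\vee$, the star operation $(\cdot)^\star$, and the harmonic-type product $\circledast$. My plan is to apply the duality theorem for multiple zeta values --- the reversal-and-swap of the iterated-integral word coming from $X\mapsto 1-X$ --- to the integral above. Under this duality the block word of $\Li_\bk$ is reversed with its two kinds of forms exchanged, which is precisely the combinatorial shadow of $\bk\mapsto\bk^\vee$: the forced \emph{initial} letters of the blocks, which produce $\sum_{\bk'\succeq\bk}$, become forced \emph{terminal} letters, converting the refinement sum over $\bk$ into one over $\bk^\vee$ and generating the signs $(-1)^{d(\bk'')}$ (consistent with $d(\bk)+d(\bk^\vee)=\wt{\bk}+1$). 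At the same time the $l-1$ extra forms, once shuffled in and dualized, should assemble into the factor $(\underbrace{1,\ldots,1}_{l})^\star$, while the trailing $\frac{dX}{X}$ becomes exactly the ``add the last parts'' that $\circledast$ records. Carrying out this bookkeeping so that the dualized integrand reorganizes term by term into $(-1)^{d(\bk^\vee)}\sum_{\bk''\succeq\bk^\vee}(-1)^{d(\bk'')}\zeta\bigl((\bk'')^\star\circledast(\underbrace{1,\ldots,1}_{l})^\star\bigr)$ is the main obstacle, since it intertwines shuffle–stuffle duality with Hoffman duality; note that it genuinely requires the multiple-zeta-value duality relations and does not hold at the level of formal sums of indices.

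Finally I would verify the result on small cases to guard against sign and normalization errors: $r=1$ recovers \eqref{eq:r=1}; $\eta(1,1;1)=-\zeta(3)$ comes from the single refinement of $(1,1)$; and $\eta(2;2)=\frac{17}{4}\zeta(4)$ (matching $\sum_{m}H_m^2/m^2$) comes out correctly both from the shuffle-side computation and, after duality, from the $\bk^\vee$–side. The entire conceptual and technical difficulty is concentrated in the last step, namely controlling the interplay of the shuffle product, the refinement/Hoffman-dual lattice, and the operations $(\cdot)^\star$ and $\circledast$.
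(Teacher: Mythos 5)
Your first half is sound and, in substance, reproduces ingredients the paper also uses: the substitution $X=1-e^{-t}$ combined with the Landen-type expansion $\Li_{\bk}\bigl(\tfrac{X}{X-1}\bigr)=(-1)^{d(\bk)}\sum_{\bk'\succeq\bk}\Li_{\bk'}(X)$ is exactly the Kaneko--Tsumura relation \eqref{eq:eta_xi} in integral form, and it reduces the problem to evaluating $(-1)^{d(\bk)-1}\sum_{\bk'\succeq\bk}\xi(\bk';l)$. The difficulty is that the remaining step --- identifying this with the right-hand side of \eqref{eq:(r,1) formula} --- is where the entire content of the theorem lies, and you do not prove it: you describe a hoped-for mechanism (shuffle-linearize $\Li_{\bk'}(X)\Li_{1,\ldots,1}(X)$, apply the $X\mapsto 1-X$ duality of iterated integrals, and track how Hoffman duality, $(\cdot)^\star$ and $\circledast$ are supposed to emerge), and you explicitly concede that carrying this out is ``the main obstacle.'' As written, this is a plan, not a proof.

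Moreover, the premise of that plan is off. You assert that the identification ``genuinely requires the multiple-zeta-value duality relations''; it does not. The paper's proof is an elementary inclusion--exclusion over subsets of $\{1,\ldots,k-1\}$: one introduces the common building blocks $S_J$ (sums over $0<a_1\,\square_1\,a_2\,\square_2\cdots\square_{k-1}\,a_k=b_l\geq\cdots\geq b_1>0$, with $\square_j$ equal to $<$ or $=$ according to whether $j\in J$), proves $\xi(\bk';l)=S_{J(\bk')}$ and $\zeta\bigl((\bk'')^\star\circledast(\underbrace{1,\ldots,1}_l)^\star\bigr)=\sum_{J\subset J(\bk'')}S_J$, and then the alternating sum over $\bk''\succeq\bk^\vee$ collapses by M\"obius inversion to $(-1)^k\sum_{J\supset J(\bk)}S_J$; no shuffle product and no analytic relation among multiple zeta values enters. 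The apparent need for duality in your approach is self-inflicted: by linearizing the product $\Li_{\bk'}(X)\Li_{1,\ldots,1}(X)$ with the shuffle product you land on the integral side, whereas the right-hand side of \eqref{eq:(r,1) formula} is built from series-side operations ($\star$, $*$, $\circledast$); expanding the product of the two series directly keeps you on the correct side and leads straight to the $S_J$'s. Until the combinatorial matching is actually carried out --- by whatever route --- the proof is incomplete, and the route you chose makes it substantially harder than necessary.
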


When $r=1$, the Hoffman dual of $(k)$ is $(\underbrace{1,\ldots,1}_k)$, 
and it has no refinement other than itself. 
Hence we recover \eqref{eq:r=1} from \eqref{eq:(r,1) formula}. 

\smallskip 

To prove Theorem \ref{thm:(r,1) formula}, 
we need some preparations. 

Fix positive integers $k$ and $l$. 
In the following, indices denoted by $\bk$ or $\bk'$ are of weight $k$ 
and sets denoted by $J$ or $J'$ are subsets of $\{1,\ldots,k-1\}$. 
For such a set $J$, we put 
\[S_J\coloneqq\sum_{0<a_1\square_1a_2\square_2
\cdots\square_{k-1}a_k=b_l\geq\cdots\geq b_1>0}
\frac{1}{a_1\cdots a_kb_l\cdots b_1}, \]
where $\square_j$ for $j=1,\ldots,k-1$ denote 
the relational operators 
\[\square_j=\begin{cases}
< & (j\in J),\\
= & (j\notin J)
\end{cases} \]
(it also depends on $l$, which we fixed). 
It is easy to see, for $\bk=(k_1,\ldots,k_r)$, that 
\begin{equation}\label{eq:zeta_S}
S_{J(\bk)}
=\sum_{0<m_1<\cdots<m_r=b_l\geq\cdots\geq b_1>0}
\frac{1}{m_1^{k_1}\cdots m_r^{k_r}b_1\cdots b_l}
=\zeta\bigl(\bk\circledast(\underbrace{1,\ldots,1}_l)^\star\bigr). 
\end{equation}

\begin{lem}
For any index $\bk$ (of weight $k$), we have 
\begin{align}
\label{eq:xi_S}
\xi(\bk;l)&=S_{J(\bk)}, \\
\label{eq:zeta^star_S}
\zeta\bigl(\bk^\star\circledast(\underbrace{1,\ldots,1}_l)^\star\bigr)
&=\sum_{J\subset J(\bk)}S_J. 
\end{align}
\end{lem}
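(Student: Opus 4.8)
The plan is to prove the two identities by entirely different means: \eqref{eq:xi_S} is analytic and rests on evaluating an integral, whereas the second identity is a formal consequence of \eqref{eq:zeta_S}. I begin with \eqref{eq:xi_S}. In the defining integral of $\xi(\bk;l)$ I substitute $x=1-e^{-t}$, under which the measure $\frac{dt}{e^t-1}$ becomes $\frac{dx}{x}$ and $t^{l-1}$ becomes $(-\log(1-x))^{l-1}$, so that
\[\xi(\bk;l)=\frac{1}{(l-1)!}\int_0^1\Li_{\bk}(x)\bigl(-\log(1-x)\bigr)^{l-1}\frac{dx}{x}.\]
Expanding $\Li_{\bk}(x)=\sum_{0<n_1<\cdots<n_r}x^{n_r}/(n_1^{k_1}\cdots n_r^{k_r})$ and interchanging summation and integration (legitimate by Tonelli, since all factors are nonnegative on $(0,1)$) reduces the whole computation to the single-variable integral
\[g(N)\coloneqq\frac{1}{(l-1)!}\int_0^1 x^{N-1}\bigl(-\log(1-x)\bigr)^{l-1}\,dx\]
evaluated at $N=n_r$.

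The heart of the argument, and the step I expect to be the main obstacle, is to show that
\[g(N)=\frac1N\sum_{N\ge b_{l-1}\ge\cdots\ge b_1>0}\frac{1}{b_1\cdots b_{l-1}}.\]
I see two routes. One is to write $\frac{1}{(l-1)!}(-\log(1-x))^{l-1}=\int_{0<u_1<\cdots<u_{l-1}<x}\prod_i\frac{du_i}{1-u_i}$, integrate out $x$ first, and then integrate the $u_i$ repeatedly by means of $\frac{1-u^N}{1-u}=\sum_{b=1}^N u^{b-1}$; this produces the nested harmonic sum directly and is formally identical to the computation of $S_2$ in the proof of Theorem \ref{thm:eta(1,k;1,l)}. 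The other is to expand $(1-e^{-t})^{N-1}$ by the binomial theorem before substituting, obtaining $g(N)=\frac1N\sum_{m=1}^N\binom{N}{m}\frac{(-1)^{m-1}}{m^{l-1}}$, and then to invoke the classical finite identity $\sum_{m=1}^N\binom{N}{m}\frac{(-1)^{m-1}}{m^{p}}=\sum_{N\ge b_p\ge\cdots\ge b_1>0}\frac{1}{b_1\cdots b_p}$, which one proves by induction on $N$.

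Granting this, I substitute back and set $b_l\coloneqq n_r$, so that the factor $1/N=1/n_r$ is absorbed as $1/b_l$; this yields
\[\xi(\bk;l)=\sum_{0<n_1<\cdots<n_r=b_l\ge\cdots\ge b_1>0}\frac{1}{n_1^{k_1}\cdots n_r^{k_r}\,b_1\cdots b_l},\]
which is exactly $S_{J(\bk)}$ by the first equality of \eqref{eq:zeta_S}. This proves \eqref{eq:xi_S}.

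Finally, the second identity follows with no further analytic input. By definition $\bk^\star=\sum_{\bk'\preceq\bk}\bk'$ runs over precisely the indices $\bk'$ with $J(\bk')\subset J(\bk)$, so the linear extension of $\zeta$ and the bilinearity of $\circledast$ give
\[\zeta\bigl(\bk^\star\circledast(\underbrace{1,\ldots,1}_l)^\star\bigr)
=\sum_{\bk':\,J(\bk')\subset J(\bk)}\zeta\bigl(\bk'\circledast(\underbrace{1,\ldots,1}_l)^\star\bigr)
=\sum_{\bk':\,J(\bk')\subset J(\bk)}S_{J(\bk')},\]
the last step being \eqref{eq:zeta_S} applied termwise. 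Since $\bk'\mapsto J(\bk')$ is a bijection between indices of weight $k$ and subsets of $\{1,\ldots,k-1\}$, the indexing set $\{\bk':J(\bk')\subset J(\bk)\}$ corresponds bijectively to $\{J:J\subset J(\bk)\}$, and the right-hand side becomes $\sum_{J\subset J(\bk)}S_J$, as desired.
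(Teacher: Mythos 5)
Your proof is correct and follows essentially the same route as the paper: the change of variables $x=1-e^{-t}$, the iterated-integral evaluation of $\int_0^1 x^{N-1}(-\log(1-x))^{l-1}\,dx$ using $\frac{1-u^N}{1-u}=\sum_{b=1}^N u^{b-1}$, and the purely combinatorial deduction of \eqref{eq:zeta^star_S} from \eqref{eq:zeta_S} via the bijection $\bk'\mapsto J(\bk')$. The only differences are cosmetic: you factor the computation through the single-variable integral $g(N)$ and also sketch an equivalent second evaluation by binomial expansion, which the paper does not need.
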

\begin{proof}
In the definition \eqref{eq:xi} of $\xi(\bk;s)$, 
make a change of variable $u=1-e^{-t}$ and substitute $s=l$. 
Then we have 
\begin{align*}
&\xi(\bk;l)\\
&=\frac{1}{(l-1)!}\int_0^1
\frac{\Li_{\bk}(u)}{u}\bigl(-\log(1-u)\bigr)^{l-1}du\\
&=\sum_{0<m_1<\cdots<m_r}\frac{1}{m_1^{k_1}\cdots m_r^{k_r}}
\int_0^1u^{m_r-1}du\int_{0<v_1<\cdots<v_{l-1}<u}
\frac{dv_1}{1-v_1}\cdots\frac{dv_{l-1}}{1-v_{l-1}}. 
\end{align*}
Here we use $-\log(1-u)=\int_0^u\frac{dv}{1-v}$. 
The identity \eqref{eq:xi_S} is obtained by 
computing the integration in the order $u,v_{l-1},\ldots,v_1$. 

The identity \eqref{eq:zeta^star_S} follows from the computation 
\begin{align*}
\zeta\bigl(\bk^\star\circledast(\underbrace{1,\ldots,1}_l)^\star\bigr)
&=\sum_{\bk'\preceq\bk}
\zeta\bigl(\bk'\circledast(\underbrace{1,\ldots,1}_l)^\star\bigr)\\
&=\sum_{\bk'\preceq\bk} S_{J(\bk')}
=\sum_{J\subset J(\bk)} S_J, 
\end{align*}
where we use \eqref{eq:zeta_S} and 
the correspondence between indices $\bk'\preceq\bk$ 
and sets $J\subset J(\bk)$. 
\end{proof}

In addition to the above lemma, 
we use the following identity 
due to Kaneko and Tsumura \cite[Proposition 3.2]{KT}: 
\begin{equation}\label{eq:eta_xi}
\eta(\bk;s)=(-1)^{d(\bk)-1}\sum_{\bk'\succeq\bk}\xi(\bk';s). 
\end{equation}

\begin{proof}[{Proof of Theorem \ref{thm:(r,1) formula}}]
Let us compute the sum in the right hand side of 
\eqref{eq:(r,1) formula}: 
\begin{align*}
\sum_{\bk'\succeq\bk^\vee}(-1)^{d(\bk')}
\zeta\bigl((\bk')^\star\circledast(\underbrace{1,\ldots,1}_l)\bigr)
&=\sum_{\bk'\succeq\bk^\vee}(-1)^{d(\bk')}
\sum_{J\subset J(\bk')}S_J\\
&=\sum_{J'\supset J(\bk^\vee)}(-1)^{\#J'+1}
\sum_{J\subset J'}S_J\\
&=\sum_{J}S_J\sum_{J'\supset J(\bk^\vee)\cup J}(-1)^{\#J'+1}. 
\end{align*}
Here, we use \eqref{eq:zeta^star_S} in the first step 
and the correspondence between indices 
$\bk'\succeq\bk^\vee$ and sets $J'\supset J(\bk^\vee)$ 
in the second step (note that $d(\bk')=\#J(\bk')+1$). 
The third step is just an exchange of summations. 

It is easily shown that 
\[\sum_{J'\supset J(\bk^\vee)\cup J}(-1)^{\#J'+1}
=\begin{cases}
0 & (J(\bk^\vee)\cup J\subsetneq\{1,\ldots,k-1\}), \\
(-1)^k & (J(\bk^\vee)\cup J=\{1,\ldots,k-1\}). 
\end{cases}\]
Because of the equivalence 
\[J(\bk^\vee)\cup J=\{1,\ldots,k-1\} 
\iff J\supset \{1,\ldots,k-1\}\setminus J(\bk^\vee)=J(\bk),\] 
we can continue the above computation as 
\begin{align*}
\sum_{J}S_J\sum_{J'\supset J(\bk^\vee)\cup J}(-1)^{\#J'+1}
&=(-1)^k \sum_{J\supset J(\bk)}S_J\\
&=(-1)^k \sum_{\bk'\succeq \bk}\xi(\bk';l)\\
&=(-1)^k(-1)^{d(\bk)-1}\eta(\bk;l), 
\end{align*}
using \eqref{eq:xi_S} and \eqref{eq:eta_xi}. 
Thus the desired identity \eqref{eq:(r,1) formula} is obtained 
if we notice that 
$k-1=\#J(\bk)+\#J(\bk^\vee)=(d(\bk)-1)+(d(\bk^\vee)-1)$, 
i.e., $k-(d(\bk)-1)=d(\bk^\vee)$. 
\end{proof}

\section*{Acknowledgments}
The author expresses his deep gratitude to Masanobu Kaneko 
and Hirofumi Tsumura for valuable discussions and comments. 
He also sincerely thanks Erik Panzer for communicating to him 
the proof of Theorem \ref{thm:eta and zeta}. 
Furthermore, he much appreciates the referee's careful reading 
and suggestion on the use of Lemma \ref{lem:bound} to clarify 
the proof of Proposition \ref{prop:eta}.

\end{document}